\def\n{\nabla}
\def\F{{\mathcal F}}
\def\I{{\mathcal I}}
\def\R{{\mathcal R}}
\newenvironment{proof}{{\bf Proof.}}{\hfill$\rule{1ex}{1ex}$\par\medskip}
\newtheorem{theorem}{Theorem}[section]
\newtheorem{proposition}[theorem]{Proposition}
\newtheorem{remark}[theorem]{Remark}
\newtheorem{definition}[theorem]{Definition}
\newtheorem{lemma}[theorem]{Lemma}
\newtheorem{corollary}[theorem]{Corollary}
\newcommand{\bp}{\begin{proof}\;} \newcommand{\ep}{\end{proof}}
\newcounter{mycounter} 
\title{Tangent Lie algebras to the holonomy group of a Finsler manifold}
\author{Zolt\'an Muzsnay and P\'eter T. Nagy}
\date{Institute of Mathematics, University of Debrecen\\
  H-4010 Debrecen, Hungary, P.O.B. 12
  \\
  \bigskip {\it E-mail}: {\tt {}muzsnay@science.unideb.hu}, {\tt
    {}petert.nagy@science.unideb.hu}}
\begin{document}

\maketitle
\abstract Our goal in this paper is to make an attempt to find the largest 
Lie algebra of vector fields on the indicatrix such that all its elements 
are tangent to the holonomy group of a Finsler manifold. First, we introduce 
the notion of the \emph{curvature algebra}, generated by curvature vector 
fields, then we define the \emph{infinitesimal holonomy algebra} by the 
smallest Lie algebra of vector fields on an indicatrix, containing the 
curvature vector fields and their horizontal covariant derivatives with 
respect to the Berwald connection. At the end we introduce 
the notion of the \emph{holonomy algebra} of a Finsler manifold by all 
conjugates of infinitesimal holonomy algebras by parallel translations with 
respect to the Berwald connection. We prove that this holonomy algebra is 
tangent to the holonomy group. 

\section{Introduction}

The notion of the holonomy group of Riemannian manifolds can be
generalized very naturally for Finsler manifolds: it is the group
generated by canonical homogeneous (nonlinear) parallel translations
along closed loops. Until now the holonomy groups of non-Riemannian
Finsler manifolds were described only in special cases: the Berwald
manifolds have the same holonomy group as some Riemannian manifolds
(cf. Z.  I. Szab\'o, \cite{Sza}) and the holonomy groups of Landsberg
manifolds are compact Lie groups (cf. L. Kozma, \cite{Koz}). A thorough
study of the holonomy algebras of homogeneous (nonlinear) connections
was initiated by W. Barthel \cite{Bar}, he gave a successive extension
by Berwald's covariant derivation of the Lie algebras generated by the
curvature vector fields.  A general setting for the study of infinite
dimensional holonomy groups and holonomy algebras of nonlinear
connections was initiated by P. Michor in \cite{Mic1}, but the
tangential properties of the holonomy algebras to the holonomy group
were not clarified.
\\[1ex]
Recently, the authors introduced in \cite{Mu_Na} the notion of tangent
Lie algebras to the holonomy group and proved that the curvature algebra
(the Lie algebra generated by curvature vector fields) is a tangent
algebra to the holonomy group.  With this technique we have constructed
a Finsler manifold (with singular metric) with infinite dimensional
curvature algebra, which implies that the holonomy group can not be a
finite dimensional Lie group in this case.  We suspect that for most of
non-Riemannian Finsler manifolds, the holonomy group is not a
finite dimensional Lie group. \\[1ex]
In a recent paper \cite{Cr_Sa} M.~Crampin, D.J.~Saunders carried on a
deep analysis of the holonomy structures of bundles with fibre metrics,
and in particular the holonomy structures of Landsbergian type Finsler
manifolds. In these cases, the holonomy groups are finite dimensional
Lie groups.  They introduced the notion of holonomy algebra and proved a
version of Ambrose-Singer Theorem for such spaces.  Reflecting to our
results, they noticed that in the general Finslerian framework the
holonomy algebra should contain the parallel translated curvature
algebras.  They showed that in this case the topological closure of this
holonomy algebra contains the covariant derivatives of curvature vector
fields, but the tangent properties of the successive covariant
derivatives of curvature vector fields are not obvious from this approach
in the cases, when the holonomy group is not a finite dimensional Lie
group.  The difficulty comes from the fact, that a topologically
non-closed infinite dimensional Lie algebra of vector fields may expand, 
if we add the covariant derivatives of its elements.
\\[1ex]
Our goal in this paper is to make an attempt to find the right notion of
the holonomy algebra of Finsler spaces.  The holonomy algebra should be
the largest Lie algebra such that all its elements are tangent to the
holonomy group. In our attempt we are building successively Lie algebras
having the tangent properties. First, we introduce the notion of the
\emph{curvature algebra} (the Lie algebra generated by curvature vector
fields) which is a tangent Lie algebra to the holonomy group (c. f.
\cite{Mu_Na}).  Then we define the \emph{infinitesimal holonomy
  algebra} by the smallest Lie algebra of vector fields on an
indicatrix, containing the curvature vector fields and their horizontal
covariant derivatives with respect to the Berwald connection and prove
the tangential property of this Lie algebra to the holonomy group.  At
the end we introduce the notion of the \emph{holonomy algebra} of a
Finsler manifold by all conjugates of infinitesimal holonomy algebras by
parallel translations with respect to the Berwald connection. We prove
that this holonomy algebra is tangent to the holonomy group.  The
question of whether the holonomy algebra introduced in this way is the
largest Lie algebra, which is tangent to the holonomy group, is still
open.

\section{Preliminaries}

Let $M$ be an $n$-dimensional $C^\infty$ manifold and let ${\mathfrak X}^{\infty}(M)$ 
denote the vector space of smooth vector fields on $M$. For a local coordinate system 
$(x^1,\dots,x^n)$ on $M$ we denote by $(x^1,\dots,x^n;y^1,\dots,y^n)$ the induced local 
coordinate system on the tangent bundle $TM$. 

\subsubsection*{Finsler manifold, canonical connection, parallelism}

A \emph{Finsler manifold} is a pair $(M,\mathcal F)$, where the Finsler
function $\F\colon TM \to \mathbb{R}$ is continuous, smooth on $\hat T M
:= TM\setminus\! \{0\}$, its restriction ${\mathcal F}_x={\mathcal
  F}|_{_{T_xM}}$ is a positively homogeneous function of degree $1$ and
the symmetric bilinear form (the Finsler metric)
\begin{displaymath}
  g_{x,y} \colon (u,v)\ \mapsto \ g_{ij}(x, y)u^iv^j=\frac{1}{2}
  \frac{\partial^2 \mathcal F^2_x(y+su+tv)}{\partial s\,\partial
    t}\Big|_{t=s=0}
\end{displaymath}
is positive definite at every $y\in \hat T_xM$.
\\[1ex]
\emph{Geodesics} of Finsler manifolds are determined by a system of
second order ordinary differential equation $\ddot{x}^i + 2 G^i(x,\dot
x)=0$, $i = 1,...,n,$ where $G^i(x,\dot x)$ are locally given by
\begin{equation}
  \label{eq:G_i}  G^i(x,y):= \frac{1}{4}g^{il}(x,y)\Big(2\frac{\partial
    g_{jl}}{\partial x^k}(x,y) -\frac{\partial g_{jk}}{\partial
    x^l}(x,y) \Big) y^jy^k.
\end{equation}
The associated \emph{homogeneous (nonlinear) parallel translation}
$\tau_{c}:T_{c(0)}M\to T_{c(1)}M$ along a curve $c:[0,1]\to \R$ is
defined by vector fields $X(t)=X^i(t)\frac{\partial}{\partial x^i}$
along $c(t)$ which are solutions of the differential equation
\begin{equation} 
  \label{eq:D} 
  D_{\dot c} X (t):=\Big(\frac{d X^i(t)}{d
    t}+ G^i_j(c(t),X(t))\dot c^j(t) \Big)\frac{\partial}{\partial x^i}
  =0,\quad\text{where}\quad G^i_j=\frac{\partial G^i}{\partial y^j}.
\end{equation}

\subsubsection*{Horizontal distribution, Berwald connection, curvature}

Let $(TM,\pi ,M)$ and $(TTM,\rho,TM)$ denote the first and the second
tangent bundle of the manifold $M$, respectively. The horizontal
distribution ${\mathcal H}TM \!  \subset\! TTM$ associated with the
Finsler manifold $(M, \mathcal F)$ can be defined as the image of the
horizontal lift which is an isomorphism
\begin{math} 
  X \to X^h 
\end{math} 
between $T_xM$ and ${\mathcal H}_{y}TM$ at $y\in T_xM$ defined by
\begin{equation} 
  \label{eq:lift}
  \Big(X^i\frac{\partial}{\partial
    x^i}\Big)^{\! h}:= X^i\left(\frac{\partial}{\partial x^i}
    -G_i^k(x,y)\frac{\partial}{\partial y^k} \right).
\end{equation}
If ${\mathcal V}TM:= \mathrm{Ker} \, \pi_{*} \!  \subset\!  TTM$ denotes
the vertical distribution on $TM$, then for any $y\in TM$ we have $T_yTM
= {\mathcal H}_yTM \oplus {\mathcal V}_yTM$. The projectors
corresponding to this decomposition will be denoted by $h:TTM \to
{\mathcal H}TM$ and $v:TTM \to {\mathcal V}TM$.  We note that the
vertical distribution is integrable.
\\[1ex]
Let $(\hat{\mathcal V}TM,\rho,\hat T M)$ be the vertical bundle over
$\hat T M := TM\setminus\! \{0\}$. We denote by ${\mathfrak
  X}^{\infty}(M)$, respectively by $\hat{\mathfrak X}^{\infty}(TM)$ the
vector space of smooth vector fields on $M$ and of smooth sections of
the bundle $(\hat{\mathcal V}TM,\tau,\hat T M)$, respectively.  The
\emph{horizontal Berwald covariant derivative} of a section
$\xi\in\hat{\mathfrak X}^{\infty}(TM)$ by a vector field $X\in{\mathfrak
  X}^{\infty}(M)$ is $\nabla_X\xi := [X^h,\xi]$.
\\[1ex]
In an induced local coordinate system $(x^i,y^i)$ on $TM$ for vector
fields $\xi(x,y) = \xi^i(x,y)\frac {\partial}{\partial y^i}$ and $X(x) =
X^i(x)\frac {\partial}{\partial x^i}$ we have (\ref{eq:lift}) and hence
\begin{equation}
  \label{covder}
  \nabla_X\xi = \left(\frac{\partial\xi^i(x,y)}{\partial x^j}
    - G_j^k(x,y)\frac{\partial \xi^i(x,y)}{\partial y^k} + 
    \frac{\partial G_j^i(x,y)}{\partial y^k}(x,y)\xi^k(x,y)\right)
  X^j\frac {\partial}{\partial y^i}. 
\end{equation}
Let $(\pi^{*}TM,\bar{\pi},\hat T M)$ be the pull-back bundle of $(\hat T
M,\pi ,M)$ by the map $\pi:TM\to M$.  Clearly, the mapping
\begin{equation} \label{iden} (x,y,\xi^i\frac {\partial}{\partial
    y^i})\mapsto(x,y,\xi^i\frac {\partial}{\partial x^i})
  :\;\hat{\mathcal V}TM\rightarrow \pi^{*}TM
\end{equation} 
is a canonical bundle isomorphism.  In the following we will use the
isomorphism (\ref{iden}) for the identification
of these bundles. \\[1ex]
The \emph{Riemannian curvature tensor} field
\begin{math}
  R_{(x,y)}(X,Y):= v\big[X^h,Y^h\big],
\end{math}
$X,Y\in T_x M$, $(x,y)\in \hat T M$ characterizes the integrability of
the horizontal distribution. Namely, if the horizontal distribution
$\mathcal H \hat TM$ is integrable, then the Riemannian curvature is
identically zero.  The expression of the Riemannian curvature tensor
$R_{(x,y)} = R^i_{jk}(x,y)dx^j\otimes
dx^k\otimes\frac{\partial}{\partial x^i}$ on the pull-back bundle
$(\pi^{*}TM,\bar{\pi},\hat T M)$ is
\begin{displaymath}
  R^i_{jk}(x,y) =  \frac{\partial G^i_j(x,y)}{\partial x^k} 
  - \frac{\partial G^i_k(x,y)}{\partial x^j} + G_j^m(x,y)
  G^i_{k m}(x,y) - G_k^m(x,y) G^i_{j m}(x,y). 
\end{displaymath} 

\subsubsection*{Indicatrix bundle} 

The \emph{indicatrix} $\I_pM$ of an $n$-dimensional Finsler manifold
$(M,\mathcal F)$ at a point $p \in M$ is the compact hypersurface
$\I_pM:= \{y \in T_pM;\ \mathcal F(y)=1\}$ in $T_pM$, diffeomorphic to
the standard $(n-1)$-sphere. The \emph {indicatrix bundle} $(\I
M,\pi,M)$ of $(M,\mathcal F)$ is a smooth subbundle of the tangent
bundle $(TM,\pi ,M)$. The group ${\mathsf {Diff}}^\infty({\I}_pM)$ of
all smooth diffeomorphisms of an indicatrix ${\I}_pM$ is a regular
infinite dimensional Lie group modeled on the vector space ${\mathfrak
  X}^\infty({\I}_pM)$ of smooth vector fields on ${\I}_pM$.  The Lie
algebra of the infinite dimensional Lie group
${\mathsf{Diff}}^\infty(\I_pM)$ is the vector space ${\mathfrak
  X}^\infty({\I}_pM)$, equipped with the negative of the usual Lie
bracket, (c.f. A. Kriegl and P. W. Michor \cite{KrMi}, Section 43).
\\[1ex]
Let $c(t)$, $0\leq t\leq a$ be a smooth curve joining the points
$p\!=\!c(0)$ and $q\!=\!c(a)$ in the Finsler manifold $(M, \mathcal F)$.
Since the parallel translation $\tau_{c}:T_pM\to T_qM$ along the curve
$c:[0,a]\to M$ is a differentiable map between $\hat T_{p}M$ and $\hat
T_{q}M$ preserving the value of the Finsler function, it induces a
parallel translation $\tau_{c}\colon \I_pM \longrightarrow \I_qM $
in the indicatrix bundle. 

\subsubsection*{Holonomy group}

The notion of the holonomy group of Riemannian manifolds can be
generalized very naturally for Finsler manifolds:
\begin{definition} 
  \emph{The \emph {holonomy group} $\mathsf{Hol}(p)$ of a Finsler space
    $(M, \F)$ at $p\in M$ is the subgroup of the group of
    diffeomorphisms ${\mathsf{Diff}}^\infty({\I}_pM)$ of the indicatrix
    ${\I}_pM$ determined by parallel translation of ${\I}_pM$ along
    piece-wise differentiable closed curves initiated at the point $p\in
    M$.}
\end{definition}
Clearly, the holonomy groups at different points of $M$ are
isomorphic. We note that the holonomy group $\mathsf{Hol}(p)$ is a
topological subgroup of the regular infinite dimensional Lie group
${\mathsf {Diff}}^\infty(\I_pM)$, but its differentiable structure is
not known in general.

\section{Tangent Lie algebras to diffeomorphism groups}
\label{tangetial}

Here we discuss the tangential properties of Lie algebras of vector
fields to an abstract subgroup of the diffeomorphism group of a
manifold. The results of this section will be applied in the following
to the investigation of tangent Lie algebras of the holonomy subgroup of
the diffeomorphism group of an indicatrix ${\I}_xM$ and to the fibred
holonomy subgroup of the diffeomorphism group of the indicatrix bundle
${\I}(M)$.
\\[1ex]
Let $P$ be a $C^\infty$ manifold, let $H$ be a (not necessarily
differentiable) subgroup of the diffeomorphism group
$\mathsf{Diff}^{\infty}(P)$ and let ${\mathfrak X}^{\infty}(P)$ be the
Lie algebra of smooth vector fields on $P$. 

\begin{definition}
  \emph{A vector field $X\!\in\! {\mathfrak X}^{\infty}(P)$ is called
    \emph{tangent} to the subgroup $H$ of $\mathsf{Diff}^{\infty}(P)$,
    if there exists a ${\mathcal C}^1$-differentiable $1$-parameter
    family $\{\phi_t\in H\}_{t\in(-\varepsilon,\varepsilon)}$ of
    diffeomorphisms of $M$ such that
    \begin{math}
      \phi_{0}=\mathsf{Id}
    \end{math}
    and
    \begin{math}
      \frac{\partial\phi_t}{\partial t}\big|_{t=0}=X.
    \end{math}
    A Lie subalgebra $\mathfrak g$ of ${\mathfrak X}^{\infty}(P)$ is
    called \emph{tangent} to $H$, if all elements of $\mathfrak g$ are
    tangent vector fields to $H$.}
\end{definition} 
Unfortunately, it is not true, that tangent vector fields to the group
$H$ generate a tangent Lie algebra to $H$. This is why we have to
introduce a stronger tangency property in Definition
\ref{def:strongly_tangent}.
\begin{definition}
  \emph{A ${\mathcal C}^{\infty}$-differentiable $k$-parameter family
    \(\{\phi_{(t_1,\dots,t_k)}\in \mathsf{Diff}^{\infty}(P)\}_{t_i\in
      (-\varepsilon,\varepsilon)}\)
      of diffeomorphisms of $P$ is called a
    \emph{commutator-like family} if it satisfies the equations}
  \begin{displaymath}
    \phi_{(t_1,\dots,t_k)}=\mathsf{Id},\quad \text{\emph{whenever}}
    \quad t_j=0 \quad \text{\emph{for some}}\quad 1 \leq j\leq k.
  \end{displaymath}
\end{definition}
We remark, that the commutators of commutator-like families are
commutator-like, and the inverse of commutator-like families are
commutator-like.
\begin{definition}
  \label{def:strongly_tangent}
  \emph{A vector field $X\!\in\! {\mathfrak X}^{\infty}(P)$ is called
    \emph{strongly tangent} to the subgroup $H$ of
    $\mathsf{Diff}^{\infty}(P)$, if there exists a commutator-like
    family $\{\phi_{(t_1,\dots,t_k)}\in
    \mathsf{Diff}^{\infty}(P)\}_{t_i\in (-\varepsilon,\varepsilon)}$ of
    diffeomorphisms satisfying the conditions
    \begin{enumerate}
    \item[\em{(A)}] $\phi_{(t_1,\dots,t_k)}\in H\quad \text{for
        all}\quad t_i\in (-\varepsilon,\varepsilon),\quad 1\leq i\leq
      k,$
    \item[\em{(B)}] $\frac{\partial^k\phi_{(t_1,\dots,t_k)}}{\partial
        t_1\cdots\partial t_k}\big|_{(0,\dots,0)}=X.$
    \end{enumerate}}
\end{definition} 
It follows from the commutator-like property that
\begin{math}
  \frac{\partial^k\phi_{(t_1,\dots,t_k)}}{\partial t_1\cdots\partial
    t_k}\big|_{(0,\dots,0)}
\end{math}
is the first non-necessarily vanishing derivative of the diffeomorphism
family $\{\phi_{(t_1,...,t_k)}\}$ at any point $x\in P$, and therefore
it determines a vector field. On the other hand, by reparametrizing the
commutator like family of diffeomorphism, it can be shown that if a
vector field is strongly tangent to a group $H$, then it is also tangent
to $H$. Moreover, we have the following
\begin{theorem}
  \label{liealg} 
  Let $\mathcal V$ be a set of vector fields strongly tangent to the
  group $H\subset\mathsf{Diff}^{\infty}(P)$.  The Lie subalgebra
  $\mathfrak v$ of ${\mathfrak X}^{\infty}(P)$ generated by $\mathcal V$
  is tangent to $H$.
\end{theorem}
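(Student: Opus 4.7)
The plan is to split the argument into three parts and then combine them: first I would show that every strongly tangent vector field is in fact tangent (this is the reparametrization remark in the paper, to be made precise); second, that the set of strongly tangent vector fields is closed under the Lie bracket; third, that the set of tangent vector fields is closed under real linear combinations. Since $\mathfrak v$ consists of $\mathbb R$-linear combinations of iterated brackets of elements of $\mathcal V$, combining these three facts yields the theorem.

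For the reparametrization step, given a commutator-like family $\phi_{(t_1,\dots,t_k)} \in H$ realizing $X$, I would expand $\phi_{(t_1,\dots,t_k)}(x)$ in Taylor series. The commutator-like condition forces every nonvanishing monomial to contain each $t_i$ to positive power, so the expansion reads $\phi_{(t_1,\dots,t_k)}(x)= x + t_1\cdots t_k\, X(x) + O\bigl(\max_i |t_i|^{k+1}\bigr)$. Setting $t_i = t^{1/k}$ for $t \geq 0$ (and using the inverse family for $t \leq 0$, glued at $0$) produces a $\mathcal C^1$ one-parameter subfamily of $H$ whose derivative at $t=0$ equals $X$; hence $X$ is tangent.

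The heart of the proof is the bracket-closure step. Given $X$ realized by $\phi^X_{(t_1,\dots,t_k)}$ and $Y$ realized by $\phi^Y_{(s_1,\dots,s_\ell)}$, I would consider the $(k+\ell)$-parameter family
\[
\Phi_{(t_1,\dots,t_k,s_1,\dots,s_\ell)} := \phi^X_{(t_1,\dots,t_k)}\circ\phi^Y_{(s_1,\dots,s_\ell)} \circ \bigl(\phi^X_{(t_1,\dots,t_k)}\bigr)^{-1}\circ\bigl(\phi^Y_{(s_1,\dots,s_\ell)}\bigr)^{-1}.
\]
By the remark just before Definition \ref{def:strongly_tangent}, $\Phi$ is commutator-like, and clearly $\Phi$ lies in $H$ because $H$ is a group. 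The key computation is to show that the mixed partial derivative $\partial^{k+\ell}\Phi/(\partial t_1\cdots \partial t_k\partial s_1\cdots \partial s_\ell)$ at the origin equals $\pm[X,Y]$. This is a generalization of the usual commutator-identity in Lie groups: writing $\phi^X = \mathsf{Id} + t_1\cdots t_k\,X + R^X$ and $\phi^Y = \mathsf{Id} + s_1\cdots s_\ell\,Y + R^Y$ with $R^X, R^Y$ vanishing to higher order in their respective parameter tuples, a careful Taylor expansion of the four-fold composition gives $\Phi_{(t,s)}(x) - x = t_1\cdots t_k\, s_1\cdots s_\ell\,[X,Y](x) + \text{(higher order)}$, where the higher-order remainder contributes nothing to the mixed derivative. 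This calculation is the main technical obstacle, and it will be the place where the definition of strong tangency is used essentially — ordinary tangency would produce only first-order information, which is insufficient to extract a bracket.

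For the linear-combination step, if $X$ and $Y$ are tangent to $H$ via $\mathcal C^1$ one-parameter families $\phi^X_t, \phi^Y_t \in H$, then $\Phi_t := \phi^X_{at}\circ\phi^Y_{bt}$ is a $\mathcal C^1$ family in $H$ (as $H$ is a group), passes through the identity at $t=0$, and a direct chain-rule computation $\partial_t|_0 \Phi_t(x) = aX(x)+bY(x)$ shows $aX+bY$ is tangent. Combining everything: applying the bracket-closure step inductively, every iterated bracket of elements of $\mathcal V$ is strongly tangent; by the reparametrization step each such bracket is tangent; by the linear-combination step the $\mathbb R$-span of these iterated brackets — which is precisely $\mathfrak v$ — consists of tangent vector fields. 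Hence $\mathfrak v$ is tangent to $H$.
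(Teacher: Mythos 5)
Your proposal is correct and takes essentially the same route as the paper: the same three ingredients appear there, namely the reparametrization remark preceding Theorem \ref{liealg} (strong tangency implies tangency), the commutator identity for commutator-like families producing $-[X,Y]$ as the first nonvanishing mixed derivative, and the closure of tangent vector fields under linear combinations. The one computation you flag as the main technical obstacle --- the Taylor expansion of the four-fold commutator --- is precisely the part the paper itself omits, deferring the detailed calculations to \cite{Mu_Na}.
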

The proof of the theorem is based on two importent observation. The
first is a generalization of the well-known relation between the
commutator of vector fields and the commutator of their induced
flows. Namely, if $\{\phi_{(s_1,...,s_k)}\}$ and
$\{\psi_{(t_1,...,t_l)}\}$ are commutator-like $k$-parameter,
respectively \,$l$-parameter families of local diffeomorphisms, then the
family of (local) diffeomorphisms
$[\phi_{(s_1,...,s_k)},\psi_{(t_1,...,t_l)}]$ defined by the commutator
of the group $\mathsf{Diff}^{\infty}(U)$ is a commutator-like
$k+l$-parameter family and
\begin{displaymath}
  {\frac{\partial^{k+l}[\phi_{(s_1,...,s_k)},\psi_{(t_1,...,t_l)}]}
    {\partial s_1\;...\;\partial s_k\;\partial t_1\;...\;\partial t_l}
    \Big|_{(0,...,0;\,0,...,0)}}(x)=-\Bigg[{\frac{\partial^k\phi_{(s_1,...,s_k)}}
    {\partial s_1\;...\;\partial
      s_k}\Big|_{(0,...,0)}},{\frac{\partial^l\psi_{(t_1,...,t_l)}}
    {\partial t_1\;...\;\partial t_l}\Big|_{(0,...,0)}}\Bigg](x)
\end{displaymath}
at any point $x\in U$. The second important fact to prove the theorem is
that the linear combinations of vector fields tangent to $H$ are also
tangent to $H$.  The detailed computations can be found in \cite{Mu_Na}.

\section{The curvature algebra at a point}

Now, se summarize our results on the tangent Lie algebras of the holonomy group $\mathsf{Hol}(p)$ at a point
$p\in M$, their proofs can be found in \cite{Mu_Na}.
\begin{definition} 
  \label{atpoint}
  \emph{A vector field $\xi\in {\mathfrak X}({\I}_pM)$ on the indicatrix
    $\I_pM$ of the Finsler manifold $(M, \F)$ is called a
    \emph{curvature vector field at the point $p\in M$}, if it is in the
    image of the curvature tensor, i.e. if there exist $X,Y\in T_pM$
    such that $\xi = r_p(X,Y)$, where
    \begin{equation}
      \label{eq:r}
      r_p(X,Y)(y):=R_{(p,y)}(X^h,Y^h)
    \end{equation}
    The Lie subalgebra
    \begin{math} 
      {\mathfrak R}_p \! := \! \big\langle \, r_p(X,Y); \;X, Y \!\in\!
      T_pM \,\big\rangle
    \end{math}
    of ${\mathfrak X}({\I}_pM)$ generated by the curvature vector fields
    at the point $p\in M$ is called the \emph{curvature algebra at the
      point $p\in M$}.}
\end{definition} 
Since the Finsler function is preserved by parallel translations, its
derivatives with respect to horizontal vector fields are identically
zero. According to \cite{Shen1}, eq.\,(10.9), the derivative of the
Finsler metric with respect to $R_{(p,y)}(X^h ,Y^h)$ vanishes, i.e.
\begin{displaymath}
  g_{(p,y)}\big(y,R_{(p,y)}(X^h,Y^h)\big)=0, 
  \quad \text{for any} \quad y,X,Y\in T_xM.
\end{displaymath} 
This means that the curvature vector fields $\xi\!=\!r_p(X,Y)$ are
tangent to the indicatrix.  In the sequel we investigate the tangential
properties of the curvature algebra to the holonomy group of the
canonical connection $\nabla $ of a Finsler manifold.

\begin{proposition}
  \label{curvat}
  Any curvature vector field at a point $p\!\in\! M$ is strongly tangent
  to the holonomy group $\mathsf{Hol}(p)$.
\end{proposition}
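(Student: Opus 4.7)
The plan is to exhibit, for each $X,Y\in T_pM$, an explicit commutator-like $2$-parameter family $\phi_{(s,t)}\in\mathsf{Hol}(p)$ whose mixed second derivative at $(0,0)$ coincides with the curvature vector field $r_p(X,Y)\in{\mathfrak X}^\infty(\I_pM)$; Definition \ref{def:strongly_tangent} with $k=2$ then yields strong tangency. The family will consist of parallel translations along small coordinate rectangles at $p$, so each member indeed belongs to $\mathsf{Hol}(p)$ by the very definition of the holonomy group.

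First I would pick a local chart $(x^1,\dots,x^n)$ centered at $p$ such that $X=\frac{\partial}{\partial x^1}\big|_p$ and $Y=\frac{\partial}{\partial x^2}\big|_p$, and extend $X,Y$ to the coordinate vector fields on this chart, which then commute. Denoting their flows by $\varphi_s^X$ and $\varphi_t^Y$, I would define, for small $s,t$, the piecewise differentiable loop $\ell_{s,t}$ at $p$ with successive vertices $p,\;\varphi_s^X(p),\;\varphi_t^Y\varphi_s^X(p),\;\varphi_{-s}^X\varphi_t^Y\varphi_s^X(p)=\varphi_t^Y(p),\;p$, the last leg being the $Y$-flow run backward for time $t$; the loop closes precisely because $[X,Y]=0$. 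Set $\phi_{(s,t)}:=\tau_{\ell_{s,t}}\in\mathsf{Hol}(p)$. Smoothness of $\phi_{(s,t)}$ in $(s,t)$ follows from smooth dependence on parameters for solutions of the parallel transport equation \eqref{eq:D}. When $s=0$ or $t=0$, the rectangle degenerates to a path traversed and immediately retraced, so $\phi_{(s,t)}=\mathsf{Id}$, verifying the commutator-like property.

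The remaining step is to identify the mixed partial $\frac{\partial^2\phi_{(s,t)}}{\partial s\,\partial t}\big|_{(0,0)}$ with $r_p(X,Y)$. This is the classical parallelogram calculation transposed to the nonlinear Berwald setting: expanding $\tau_{\ell_{s,t}}(y)$ in $(s,t)$ along each of the four sides by means of \eqref{eq:D}, the pure $s$- and pure $t$-derivatives vanish (forced by the commutator-like property), and the cross term produces the vertical component $v[X^h,Y^h]_{(p,y)}=R_{(p,y)}(X^h,Y^h)$, i.e.\ the value $r_p(X,Y)(y)$ as given in \eqref{eq:r}. The $g_{(p,y)}$-orthogonality to $y$ recalled from \cite{Shen1} guarantees that this vector lies in $T_y\I_pM$, so the resulting field is a well-defined element of ${\mathfrak X}^\infty(\I_pM)$.

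The main obstacle is this last identification, because the Berwald parallel transport is a \emph{nonlinear} map between indicatrices and $G^i_j(x,y)$ depends nontrivially on $y$; one cannot just invoke the Riemannian computation, but must carry out the Taylor expansion of the four segmented ODE solutions carefully and check that the only surviving coefficient of $st$ agrees with the formula for $R^i_{jk}(x,y)$ written down in Section 2. The detailed version of this expansion is precisely what is carried out in \cite{Mu_Na}, and it is on this step that the whole argument rests.
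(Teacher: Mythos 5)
Your proposal takes essentially the same route as the paper: the paper's proof (found in \cite{Mu_Na} and mirrored fibrewise in the proof of Proposition \ref{cvf}) likewise uses parallel translation around the parallelogram spanned by $sX_p$ and $tY_p$ as a commutator-like $2$-parameter family in $\mathsf{Hol}(p)$, degenerating to $\mathsf{Id}$ when $s=0$ or $t=0$, with mixed second derivative at $(0,0)$ equal to $r_p(X,Y)$. Your deferral of the detailed Taylor expansion identifying that mixed partial with the curvature is consistent with the paper's own level of detail, since it also cites \cite{Mu_Na} for exactly that computation.
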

\begin{proposition} 
  The curvature algebra ${\mathfrak R}_p$ at any point $p\in M$ of a
  Riemannian manifold $(M,g)$ is isomorphic to the linear Lie algebra on
  the tangent space $T_pM$ generated by the curvature operators of
  $(M,g)$ at $p\in M$.
\end{proposition}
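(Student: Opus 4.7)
The plan is to show that, in the Riemannian case, each curvature vector field $r_p(X,Y)$ on the indicatrix ${\I}_pM$ coincides with the restriction to the unit sphere of the linear vector field on $T_pM$ induced by the classical curvature operator $\bar R(X,Y)$, and then to convert this identification into a Lie algebra isomorphism by a bracket comparison and an injectivity check.

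First, I would make the Finsler-theoretic curvature explicit in the Riemannian setting. Since the spray coefficients are quadratic in $y$, namely $G^i(x,y) = \tfrac{1}{2}\Gamma^i_{jk}(x)\, y^j y^k$, the first derivatives $G^i_j = \Gamma^i_{jk}(x)\,y^k$ are linear in $y$ and the second derivatives $G^i_{jk} = \Gamma^i_{jk}(x)$ are independent of $y$. Substituting these into the formula for $R^i_{jk}(x,y)$ from Section~2 yields $R^i_{jk}(x,y) = R^i_{\,ljk}(x)\,y^l$, where $R^i_{\,ljk}$ are the components of the classical Riemann curvature tensor. Consequently, for $X,Y\in T_pM$ the curvature vector field takes the form $r_p(X,Y)(y) = \bar R(X,Y)\,y$, where $\bar R(X,Y) : T_pM \to T_pM$ is the classical linear curvature operator. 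Because $\bar R(X,Y)$ is skew-symmetric with respect to $g_p$ (as already noted in the excerpt for $r_p(X,Y)$), the linear vector field $y\mapsto \bar R(X,Y)\,y$ is genuinely tangent to ${\I}_pM$.

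Next, I would perform the bracket comparison. A short calculation in linear coordinates shows that for any two linear vector fields $\widetilde A(y) := Ay$ and $\widetilde B(y) := By$ on $T_pM$ one has $[\widetilde A,\widetilde B] = -\,\widetilde{[A,B]}$, where $[A,B] = AB-BA$. Thus the assignment $A \mapsto -\widetilde A$ is a Lie algebra homomorphism from $\mathfrak{gl}(T_pM)$ into ${\mathfrak X}^\infty(T_pM\setminus\{0\})$; restricted to the Lie subalgebra generated by the operators $\bar R(X,Y)$, this homomorphism takes values in ${\mathfrak X}^\infty({\I}_pM)$ and has image exactly ${\mathfrak R}_p$. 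Injectivity is immediate: if $\widetilde A$ vanishes on the unit sphere, then $Ay=0$ for all $y\in{\I}_pM$, and since ${\I}_pM$ spans $T_pM$ we conclude $A=0$. This gives the isomorphism claimed in the proposition. The main (and rather mild) obstacle is keeping track of the sign in the bracket formula and confirming that restricting linear vector fields to the sphere preserves the Lie algebra structure; both points are handled by the skew-symmetry of $\bar R(X,Y)$ and the spanning property of the sphere.
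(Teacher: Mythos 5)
Your proposal is correct, and it is essentially the expected argument: the paper itself contains no proof of this proposition (it is quoted from \cite{Mu_Na}), but the route you take --- quadratic spray coefficients giving $G^i_j=\Gamma^i_{jk}y^k$, hence $R^i_{jk}(x,y)=R^i_{\,ljk}(x)y^l$ linear in $y$, identification of $r_p(X,Y)$ with the linear vector field $y\mapsto \bar R(X,Y)y$ tangent to the sphere by skew-symmetry, the sign-corrected bracket correspondence $[\widetilde A,\widetilde B]=-\widetilde{[A,B]}$, and injectivity from the fact that the indicatrix spans $T_pM$ --- is precisely the standard proof given in the cited reference. Your care with the sign (using $A\mapsto-\widetilde A$, which still has image exactly ${\mathfrak R}_p$ since the generating set is closed under negation, as $r_p(X,Y)=-r_p(Y,X)$) and with restriction to the sphere being a bracket-preserving, injective operation on these linear fields covers the only delicate points.
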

\begin{remark} 
  The dimension of the curvature algebra at any point $p\in M$ of a
  Finsler surface is $\leq 1$.
\end{remark}

\section{Fibred holonomy group and fibred holonomy algebra}

Now, we introduce the notion of the fibred holonomy group of a Finsler
manifold $(M, \F)$ as a subgroup of the diffeomorphism group of the
total manifold $\I M$ of the bundle $(\I M,\pi,M)$ and apply our results
on tangent vector fields to an abstract subgroup of the diffeomorphism
group to the study of tangent Lie algebras to the fibred holonomy group.
\begin{definition}
  \emph{The} fibred holonomy group $\mathsf{Hol_f}(M)$ \emph{of $(M,
    \F)$ consists of fibre preserving diffeomorphisms
    $\Phi\in\mathsf{Diff}^{\infty}(\I M)$ of the indicatrix bundle $(\I
    M,\pi,M)$ such that for any $p\in M$ the restriction $\Phi_p
    =\Phi|_{\I_p M}\in\mathsf{Diff}^{\infty}(\I_p M)$ belongs to the
    holonomy group $\mathsf{Hol}(p)$.}
\end{definition}
We note that the holonomy group $\mathsf{Hol}(p)$ and the fibred
holonomy group $\mathsf{Hol_f}(M)$ are topological subgroups of the
infinite dimensional Lie groups ${\mathsf {Diff}}^{\infty}(\I_pM)$ and
${\mathsf {Diff}}^{\infty}(\I M)$ respectively.
\\[1ex]
The definition of strongly tangent vector fields yields
\begin{remark} 
  A vector field $\xi \in {\mathfrak X}^{\infty}(\I M)$ is strongly
  tangent to the fibred holonomy group $\mathsf{Hol_f}(M)$ if and only
  if there exists a family $\big\{\Phi_{(t_1,\dots,t_k)}
  \big|_{{\I}M}\big\}_{t_i\in (-\varepsilon,\varepsilon)}$ of fibre
  preserving diffeomorphisms of the bundle $(\I M,\pi,M)$ such that for
  any indicatrix ${\I}_p$ the induced family
  $\big\{\Phi_{(t_1,\dots,t_k)}
  \big|_{{\I}_pM}\big\}_{t_i\in(-\varepsilon,\varepsilon)}$ of
  diffeomorphisms is contained in the holonomy group $\mathsf{Hol}(p)$
  and $\xi\big|_{{\I}_pM}$ is strongly tangent to $\mathsf{Hol}(p)$.
\end{remark}
Since $\pi\big(\Phi_{(t_1,\dots,t_k)}(p)\big)\! \equiv \!p$ and
$\pi_*(\xi) \!=\! 0$ for every $p\in U$, we get the
\begin{corollary}\label{infholgen}
  Strongly tangent vector fields to the fibred holonomy group
  $\mathsf{Hol_f}(M)$ are vertical vector fields.  If $\xi \in
  {\mathfrak X}^{\infty}(\I M)$ is strongly tangent to
  $\mathsf{Hol_f}(M)$ then its restriction $\xi_p:= \xi\big|_{\I_p}$ to
  any indicatrix $\I_p$ is strongly tangent to the holonomy group
  $\mathsf{Hol}(p)$.
\end{corollary}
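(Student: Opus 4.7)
The plan is to extract both assertions directly from Definition \ref{def:strongly_tangent} applied to the fibred holonomy group, exploiting the fact that every element of $\mathsf{Hol_f}(M)$ is fibre-preserving over the identity of $M$. By hypothesis there exists a commutator-like family $\{\Phi_{(t_1,\dots,t_k)}\}\subset\mathsf{Hol_f}(M)$ whose mixed partial derivative at the origin equals $\xi$; all of the analysis will be done on this one family.

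For the verticality statement, I would start from the identity $\pi\circ\Phi_{(t_1,\dots,t_k)}=\pi$, which holds on $\I M$ for every parameter value since each $\Phi_{(t_1,\dots,t_k)}$ preserves fibres. Differentiating successively in $t_1,\dots,t_k$ at the origin and applying $\pi_*$ gives $\pi_*\bigl(\frac{\partial^k\Phi_{(t_1,\dots,t_k)}}{\partial t_1\cdots\partial t_k}\big|_0\bigr)=0$. Here I would use the standard observation, already noted after Definition \ref{def:strongly_tangent}, that the commutator-like property forces every lower-order mixed partial of $\Phi_{(t_1,\dots,t_k)}$ at the origin to vanish (if any $t_j=0$ then $\Phi$ is the identity, so any partial that does not differentiate in $t_j$ is zero there), so only the top derivative contributes and the Leibniz-style expansion collapses to the single term $\pi_*\xi=0$. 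Hence $\xi$ is vertical.

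For the restriction statement, I would fix $p\in M$ and consider the fibrewise restrictions $\Phi_{(t_1,\dots,t_k)}|_{\I_pM}$. By the very definition of $\mathsf{Hol_f}(M)$ these restrictions lie in $\mathsf{Hol}(p)$, and the commutator-like property is inherited term-wise (setting $t_j=0$ in the restricted family still gives the identity on $\I_pM$). Since taking partial derivatives in the parameters commutes with restriction to the embedded submanifold $\I_pM$, the mixed derivative at the origin of the restricted family is exactly $\xi|_{\I_pM}$. Thus $\{\Phi_{(t_1,\dots,t_k)}|_{\I_pM}\}$ realizes $\xi|_{\I_pM}$ as a strongly tangent vector field to $\mathsf{Hol}(p)$ in the sense of Definition \ref{def:strongly_tangent}.

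I do not expect genuine obstacles; the only delicate point is the careful use of the commutator-like property to eliminate the lower-order partials in the Leibniz expansion of $\pi_*\Phi_{(t_1,\dots,t_k)}$, but this is precisely the observation already made in the paragraph following Definition \ref{def:strongly_tangent}. Everything else is routine bookkeeping based on the definitions of $\mathsf{Hol_f}(M)$ and of strong tangency.
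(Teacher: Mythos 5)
Your proposal is correct and follows essentially the same route as the paper, which derives both assertions directly from the definitions: verticality from $\pi\circ\Phi_{(t_1,\dots,t_k)}=\pi$ giving $\pi_*\xi=0$, and the restriction statement from the fibrewise characterization of strong tangency to $\mathsf{Hol_f}(M)$ recorded in the remark preceding the corollary. Your added care in using the commutator-like property to justify that only the top-order mixed partial survives is exactly the observation the paper makes after Definition \ref{def:strongly_tangent}, so you have simply made explicit what the paper leaves implicit.
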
  
The curvature vector fields and the curvature algebra at a point has
been defined on an indicatrix of the manifold $M$. Now we extend the
domain of their definition to the total manifold of the indicatrix
bundle.
\begin{definition}
  \emph{A vector field $\xi\in {\mathfrak X}^{\infty}({\I}M)$ on the
    indicatrix bundle $\I M$ is a \emph{curvature vector field} of the
    Finsler manifold $(M, \F)$, if there exist $X, Y\in {\mathfrak
      X}^{\infty}(M)$ such that $\xi = r(X,Y)$, where
    $r(X,Y)(x,y):=R_{(x,y)}(X_x,Y_x)$ for $x\in M$ and $y \in {\I}_xM$.
    \\
    The Lie algebra $\mathfrak{R}(M)$ generated by the curvature vector
    fields of $(M, \F)$ is called the \emph{curvature algebra} of the
    Finsler manifold $(M, \F)$.}
\end{definition}
\begin{proposition} \label{cvf} If the Finsler manifold $(M, \F)$ is 
diffeomorphic to $\mathbb{R}^n$ then any curvature vector field \;$\xi\in 
{\mathfrak X}^{\infty}({\I}M)$ of $(M, \F)$ is strongly tangent to the fibred 
holonomy group $\mathsf{Hol_f}(M)$.
\end{proposition}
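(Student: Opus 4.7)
The plan is to promote the pointwise construction behind Proposition~\ref{curvat} to a single smooth family acting on the whole indicatrix bundle. Fix a diffeomorphism identifying $M$ with $\mathbb R^{n}$ so that we have a global coordinate system in which the given vector fields $X, Y \in {\mathfrak X}^{\infty}(M)$ are globally defined. For each $p \in M$ and sufficiently small $(s,t) \in (-\varepsilon,\varepsilon)^{2}$, let $\gamma^{p}_{(s,t)}$ be the closed piecewise-linear parallelogram loop in this chart with successive vertices
\[
p,\quad p+tX_{p},\quad p+tX_{p}+sY_{p},\quad p+sY_{p},\quad p,
\]
whose four sides are Euclidean line segments in the global chart. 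This loop is closed by construction, depends smoothly on $(s,t,p)$, and reduces to the concatenation of a single segment with its reverse whenever $s=0$ or $t=0$.

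Using these loops, I define a $2$-parameter family $\{\Phi_{(s,t)}\}$ of fibre-preserving diffeomorphisms of $\I M$ by declaring $\Phi_{(s,t)}|_{\I_{p}M}$ to be the parallel translation along $\gamma^{p}_{(s,t)}$. Smooth dependence of the homogeneous parallel transport~(\ref{eq:D}) on its data, together with the smooth $p$-dependence of the loops, ensures that each $\Phi_{(s,t)}$ is a smooth diffeomorphism of $\I M$. By construction $\Phi_{(s,t)}|_{\I_{p}M} \in \mathsf{Hol}(p)$ for every $p$, so $\Phi_{(s,t)} \in \mathsf{Hol_f}(M)$; and because parallel translation along a segment traversed in both directions is the identity, the degeneracy noted above shows that $\{\Phi_{(s,t)}\}$ is a commutator-like $2$-parameter family in the sense of Definition~\ref{def:strongly_tangent}.

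It remains to evaluate $\frac{\partial^{2}\Phi_{(s,t)}}{\partial s\,\partial t}\big|_{(0,0)}$ and identify it with $\xi = r(X,Y)$. On each fibre $\I_{p}M$ this is precisely the Finslerian parallel-transport-around-an-infinitesimal-parallelogram computation at $p$ with infinitesimal sides $X_{p}$ and $Y_{p}$, which by the pointwise Proposition~\ref{curvat} (established in \cite{Mu_Na}) produces $R_{(p,\cdot)}(X^{h}_{p}, Y^{h}_{p}) = r(X,Y)|_{\I_{p}M}$. Since this agreement is fibre-wise and all ingredients depend smoothly on $p$, the resulting vertical vector field on $\I M$ is $\xi$ itself, establishing strong tangency.

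The main obstacle I foresee lies not in the global assembly---which is straightforward given the simple topology of $M \cong \mathbb R^{n}$---but in the mixed-derivative computation on each fibre: the loop is only piecewise $C^{1}$, and the defining equation~(\ref{eq:D}) for Finslerian parallel translation is genuinely nonlinear in the fibre coordinate, so the familiar Riemannian Taylor expansion of the holonomy around a small parallelogram must be replaced by one adapted to the Berwald connection. This delicate step is exactly what was already carried out pointwise in \cite{Mu_Na}; the new content here is the verification that the construction is smooth in $p$ and assembles into a genuine commutator-like family of fibre-preserving diffeomorphisms in $\mathsf{Hol_f}(M)$.
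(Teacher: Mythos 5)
Your proposal is correct and follows essentially the same route as the paper's own proof: both identify $M$ with $\mathbb{R}^n$, construct for each $p$ the (nonlinear) parallel translation of $\I_pM$ around the small parallelogram spanned by the scaled vectors $X_p$ and $Y_p$, note that this gives a smooth commutator-like $2$-parameter family of fibre-preserving diffeomorphisms lying in $\mathsf{Hol_f}(M)$, and invoke the pointwise computation behind Proposition~\ref{curvat} (from \cite{Mu_Na}) to identify the mixed second derivative at the origin with $\xi = r(X,Y)$. The differences are cosmetic only (the roles of $s$ and $t$ in the vertex labels and your explicit remark that the degenerate loops are a segment traversed both ways).
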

\bp Since $M$ is diffeomorphic to $\mathbb{R}^n$ we can identify the
manifold $M$ with the vector space $\mathbb{R}^n$. Let $\xi= r(X,Y)\in
{\mathfrak X}^{\infty}({\I}\mathbb{R}^n)$ be a curvature vector field
with $X, Y\in {\mathfrak X}^{\infty}(\mathbb{R}^n)$. According to
Proposition \ref{curvat} its restriction $\xi\big|_{\I_p\mathbb{R}^n}$
to any indicatrix $\I_p\mathbb{R}^n$ is strongly tangent to the holonomy
groups $\mathsf{Hol}(p)$. We have to prove that there exists a family
$\big\{\Phi_{(t_1,\dots,t_k)} \big|_{{\I}\mathbb{R}^n}\big\}_{t_i\in
  (-\varepsilon,\varepsilon)}$ of fibre preserving diffeomorphisms of
the indicatrix bundle $(\I \mathbb{R}^n,\pi,\mathbb{R}^n)$ such that for
any $p\in \mathbb{R}^n$ the family of diffeomorphisms induced on the
indicatrix ${\I}_p$ is contained in $\mathsf{Hol}(p)$ and
$\xi\big|_{{\I}_p\mathbb{R}^n}$ is strongly tangent to $\mathsf{Hol}(p)$.\\
For any $p\in\mathbb{R}^n$ and $-1< s, t< 1$ let $\Pi(s X_p,t Y_p)$ be
the parallelogram in $\mathbb{R}^n$ determined by the vertexes $p, p + s
X_p, p + s X_p + t Y_p, p + t Y_p\in\mathbb{R}^n$ and let $\tau_{\,\Pi(s
  X_p,t Y_p)}:\I_p\to\I_p$ denote the (nonlinear) parallel translation
of the indicatrix $\I_p$ along the parallelogram $\Pi(s X_p,t Y_p)$ with
respect to the associated homogeneous (nonlinear) parallel translation
of the Finsler manifold $(\mathbb{R}^n, \F)$. Clearly we have
$\tau_{\,\Pi(s X_p,t Y_p)} = \mathsf{Id}_{\I \mathbb{R}^n}$, if $s=0$ or
$t=0$ and
\[ \frac{\partial^2\tau_{\,\Pi(s X_p,t Y_p)}}{\partial s\partial
  t}\Big|_{(s,t)=(0,0)} = \xi_p \quad\text{for every}\quad p
\in\mathbb{R}^n.\] Since $\Pi(s X_p,t Y_p)$ is a differentiable field of
parallelograms in $\mathbb{R}^n$, the maps $\tau_{\,\Pi(s X_p,t Y_p)}$,
$p\in\mathbb{R}^n$, $0< s, t< 1$, define a $2$-parameter family of fibre
preserving diffeomorphisms of the indicatrix bundle
$\I\mathbb{R}^n$. The diffeomorphisms induced by the family
$\big\{\tau_{\,\Pi(s X_p,t Y_p)}\big\}_{s,t\in (-1,1)}$ on any
indicatrix ${\I}_p$ are contained in $\mathsf{Hol}(p)$. Hence the vector
field $\xi\!\in\! {\mathfrak X}^{\infty}(\mathbb{R}^n)$ is strongly
tangent to the fibred holonomy group $\mathsf{Hol_f}(M)$, hence the
assertion is proved.  \ep
\begin{corollary} 
  If $M$ is diffeomorphic to $\mathbb{R}^n$ then the curvature algebra
  $\mathfrak{R}(M)$ of $(M, \F)$ is tangent to the fibred holonomy group
  $\mathsf{Hol_f}(M)$.
\end{corollary}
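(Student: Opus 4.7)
The plan is to reduce this statement to a direct application of Theorem~\ref{liealg} (tangency transfers from a generating set to the generated Lie algebra) combined with the preceding Proposition~\ref{cvf} (which supplies the required strong tangency of the generators).

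First, I would set the stage for invoking Theorem~\ref{liealg}: take the ambient manifold to be $P := \I M$, the abstract subgroup to be $H := \mathsf{Hol_f}(M) \subset \mathsf{Diff}^{\infty}(\I M)$, and the set of vector fields to be
\begin{displaymath}
  \mathcal V := \{\, r(X,Y) \;:\; X, Y \in \mathfrak{X}^{\infty}(M)\,\}
  \subset \mathfrak{X}^{\infty}(\I M).
\end{displaymath}
By definition, the Lie subalgebra of $\mathfrak{X}^{\infty}(\I M)$ generated by $\mathcal V$ is exactly the curvature algebra $\mathfrak{R}(M)$.

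Second, I would verify the hypothesis of Theorem~\ref{liealg}, namely that every element of $\mathcal V$ is strongly tangent to $H$. This is precisely the content of Proposition~\ref{cvf}, whose hypothesis $M \cong \mathbb{R}^n$ coincides with the one in the present corollary. At this point Theorem~\ref{liealg} applies verbatim and produces the conclusion that $\mathfrak{R}(M)$ is tangent to $\mathsf{Hol_f}(M)$, which is the assertion.

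Because both ingredients are already in place, there is no genuine obstacle to overcome: the difficult geometric input (producing the commutator-like $2$-parameter family $\tau_{\Pi(sX_p, tY_p)}$ of parallel translations around infinitesimal parallelograms, and checking that it lies fibre-wise inside each $\mathsf{Hol}(p)$) was carried out in Proposition~\ref{cvf}, while the algebraic passage from strongly tangent generators to a tangent Lie algebra was handled once and for all in Theorem~\ref{liealg}. The only thing I would double-check is that the identification of $\mathfrak{R}(M)$ as the Lie algebra generated by $\mathcal V$ matches Theorem~\ref{liealg}'s notion of the Lie subalgebra generated by a set of vector fields, but this is immediate from the definition given just before Proposition~\ref{cvf}.
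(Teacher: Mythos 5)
Your proposal is correct and matches the paper's reasoning exactly: the corollary is stated without a separate proof precisely because it is the immediate combination of Proposition~\ref{cvf} (strong tangency of the curvature vector fields to $\mathsf{Hol_f}(M)$) with Theorem~\ref{liealg} applied to $P=\I M$, $H=\mathsf{Hol_f}(M)$, and the generating set of curvature vector fields. Your extra check that $\mathfrak{R}(M)$ is by definition the Lie algebra generated by this set is the right (and only) point to verify.
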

The following assertion shows that similarly to the Riemannian case, the
curvature algebra can be extended to a larger tangent Lie algebra
containing all horizontal covariant derivatives of the curvature algebra
vector fields.
\begin{proposition}
  \label{nabla_X} If $\xi \in {\mathfrak X}^{\infty}({\I}M)$ is strongly
  tangent to the fibred holonomy group $\mathsf{Hol_f}(M)$ of $(M, \F)$,
  then its horizontal covariant derivative $\nabla_{\!\! X}\xi$ along
  any vector field $X\in{\mathfrak X}^{\infty}(M)$ is also strongly
  tangent to $\mathsf{Hol_f}(M)$.
\end{proposition}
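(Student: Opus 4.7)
The plan is to combine the commutator-like family witnessing strong tangency of $\xi$ with the local flow of the horizontal lift $X^h$ to manufacture a new commutator-like family with one additional parameter whose top mixed derivative at the origin is $\nabla_X\xi=[X^h,\xi]$ and whose values all lie in $\mathsf{Hol_f}(M)$. Theorem~\ref{liealg} and the preceding discussion then deliver the desired strong tangency.

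By hypothesis I can fix a commutator-like $k$-parameter family $\{\phi_{(t_1,\dots,t_k)}\}\subset\mathsf{Hol_f}(M)$ with $\partial^k\phi/(\partial t_1\cdots\partial t_k)|_0=\xi$. Since parallel translation preserves $\F$, the horizontal lift $X^h$ is tangent to the indicatrix bundle $\I M$, and I denote by $\{\Phi^X_s\}$ its (local) flow on $\I M$. This $\Phi^X_s$ covers the flow of $X$ on $M$, sending $\I_pM$ to $\I_{c_p(s)}M$ by parallel translation along the integral curve $c_p$ of $X$; note that $\Phi^X_s$ itself does \emph{not} belong to $\mathsf{Hol_f}(M)$ for $s\ne 0$. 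Nonetheless I form the $(k+1)$-parameter family
\begin{displaymath}
  \Psi_{(s,t_1,\dots,t_k)}:=\bigl[\phi_{(t_1,\dots,t_k)},\Phi^X_s\bigr]
  =\phi_{(t_1,\dots,t_k)}\circ\Phi^X_s\circ\phi_{(t_1,\dots,t_k)}^{-1}\circ\Phi^X_{-s},
\end{displaymath}
which is automatically commutator-like: setting $s=0$ makes $\Phi^X_0=\mathsf{Id}$, and setting any $t_j=0$ makes $\phi_{(t_1,\dots,t_k)}=\mathsf{Id}$, so the commutator collapses to the identity in either case.

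The central step is to verify that each $\Psi_{(s,t_1,\dots,t_k)}$ in fact belongs to $\mathsf{Hol_f}(M)$. Tracing a point $v\in\I_pM$ through the four factors shows that $\Psi$ preserves each fibre and that its restriction to $\I_pM$ coincides with parallel translation along the closed loop at $p$ obtained by concatenating: the $-X$-integral-curve segment from $p$ to $c_p(-s)$; the loop at $c_p(-s)$ realising $\phi_{(t_1,\dots,t_k)}^{-1}|_{\I_{c_p(-s)}M}\in\mathsf{Hol}(c_p(-s))$; the $+X$-integral-curve segment back from $c_p(-s)$ to $p$; and finally the loop at $p$ realising $\phi_{(t_1,\dots,t_k)}|_{\I_pM}\in\mathsf{Hol}(p)$. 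Since this concatenation is a genuine piecewise differentiable closed loop based at $p$, its restriction lies in $\mathsf{Hol}(p)$, hence $\Psi\in\mathsf{Hol_f}(M)$. The commutator formula recalled in Section~\ref{tangetial} then gives
\begin{displaymath}
  \frac{\partial^{k+1}\Psi_{(s,t_1,\dots,t_k)}}{\partial s\,\partial t_1\cdots\partial t_k}\bigg|_0
  =-[\xi,X^h]=[X^h,\xi]=\nabla_X\xi,
\end{displaymath}
which is exactly the strong tangency of $\nabla_X\xi$.

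The main obstacle is the verification of condition (A): because $\Phi^X_s$ is only fibre-covering and not itself a holonomy, one must argue that the group commutator with $\phi$ folds its excursion along the integral curve of $X$ back into a legitimate closed loop at $p$. A secondary, purely technical point is that the flow of $X^h$ is in general only locally defined on $\I M$; this is handled in the usual way by cutting $X$ off outside a relatively compact neighbourhood of the chosen base point, which leaves $[X^h,\xi]$ unchanged at that point.
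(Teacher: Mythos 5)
Your proof is correct and follows essentially the same route as the paper: the paper likewise forms the group commutator of the witnessing commutator-like family with the parallel translation $\tau_{t_{k+1}}$ along the flow of $X$ (which is exactly your flow of $X^h$ on $\I M$), applies the derivative formula of Section~\ref{tangetial} to get $-[\xi,X^h]$, and uses verticality of $\xi$ to identify this with $\nabla_X\xi$. Your explicit verification of condition (A) by tracing the concatenated loop, and the remark on the local definedness of the flow, are details the paper leaves implicit, not a different argument.
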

\bp Let $\tau$ be the (nonlinear) parallel translation along the flow
$\varphi$ of the vector field $X$, i.e. for every $p\in M$ and $t\in
(-\varepsilon_p,\varepsilon_p)$ the map $\tau_t(p)\colon\I_pM
\to\I_{\varphi_t(p)}M$ is the (nonlinear) parallel translation along the
integral curve of $X$. If $\{\Phi_{(t_1,\dots,t_k)}\}_{t_i\in
  (-\varepsilon,\varepsilon)}$ is a ${\mathcal
  C}^{\infty}$-differentiable $k$-parameter family
$\{\Phi_{(t_1,\dots,t_k)}\}_{t_i\in (-\varepsilon,\varepsilon)}$ of
fibre preserving diffeomorphisms of the indicatrix bundle $(\I M,\pi
|_M,M)$ satisfying the conditions of Definition \ref{infholgen} then the
commutator
\begin{displaymath} [\Phi_{(t_1, \ldots ,t_k)}, \tau_{t_{k+1}}]:=
  \Phi^{-1}_{(t_1,...,t_k)}\circ
  (\tau_{t_{k+1}})^{-1}\circ\Phi_{(t_1,...,t_k)}\circ \tau_{t_{k+1}}
\end{displaymath}
of the group $\mathsf{Diff}^{\infty}\big(\I M\big)$ fulfills $[\Phi_{(t_1,...,t_k)},\tau_{t_{k+1}}]=\mathsf{Id}$, 
if some of its variables equals $0$. Moreover
\begin{equation}\label{eq:phi_tau}
  \frac{\partial^{k+1}[\Phi_{(t_1...t_k)},\tau_{(t_{k+1})}]} {\partial t_1\;...\; \partial t_{k+1}} \Bigg|_{(0...0)} 
  = -\big[\xi, X^h\big]
\end{equation}
at any point of $M$, which shows that the vector field $\big[\xi,X^h\big]$ is strongly tangent to $\mathsf{Hol_f}(M)$.
Moreover, since the vector field $\xi$ is vertical, we have $h[X^h,\xi]=0$, and using $\nabla_X\xi := [X^h,\xi]$ we obtain
\begin{displaymath}
  -[\xi,X^h]=[X^h, \xi] =v[X^h, \xi] = \n_{X}\xi
\end{displaymath}
which yields the assertion.
\ep
\begin{definition}
  \emph{Let $\mathfrak{hol_f}(M)$ be the smallest Lie algebra of vector
    fields on the indicatrix bundle $\I M$ satisfying the properties
\begin{enumerate}
\item[(i)] any curvature vector field $\xi$ belongs to $\mathfrak{hol_f}(M)$,
\item[(ii)] if $\xi\in\mathfrak{hol_f}(M)$ and $X\in {\mathfrak
    X}^{\infty}(M)$, then the covariant derivative $\nabla_{\!\! X}\xi$
  also belongs to $\mathfrak{hol_f}(M)$.
\end{enumerate}
The Lie algebra $\mathfrak{hol_f}(M)\subset{\mathfrak
  X}^{\infty}({\I}M)$ is called the \emph{fibred holonomy algebra} of
the Finsler manifold $(M, \mathcal F)$.}
\end{definition}   
\begin{remark} 
  The fibred holonomy algebra $\mathfrak{hol_f}(M)$ is invariant with
  respect to the horizontal covariant derivation with respect to the
  Berwald connection, i.e.
  \begin{equation} 
    \label{eq:D_X_xi}
    \xi \in \mathfrak{hol_f}(M) \quad
    \text{and}\quad X\in {\mathfrak X}^{\infty}(M)\quad\Rightarrow \quad
    \nabla_{\!  X}\xi \in \mathfrak{hol_f}(M).
  \end{equation}
\end{remark}
The results of this sections yield the following 
\begin{theorem} If $M$ is diffeomorphic to $\mathbb{R}^n$ then the
  fibred holonomy algebra $\mathfrak{hol_f}(M)$ is tangent to the fibred
  holonomy group $\mathsf{Hol_f}(M)$.
\end{theorem}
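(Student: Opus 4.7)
The plan is to realize $\mathfrak{hol_f}(M)$ as the Lie algebra generated by a set of strongly tangent vector fields and then invoke Theorem \ref{liealg}. I would introduce the set $\mathcal{V} \subset \mathfrak{X}^\infty(\I M)$ consisting of all iterated horizontal covariant derivatives of curvature vector fields, namely the vector fields of the form $\nabla_{X_1}\!\cdots\nabla_{X_k}\xi$ with $k\geq 0$, $X_1,\dots,X_k \in \mathfrak{X}^\infty(M)$, and $\xi$ a curvature vector field on $\I M$ (where $k=0$ means $\xi$ itself).

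The first step is to show by induction on $k$ that every element of $\mathcal{V}$ is strongly tangent to $\mathsf{Hol_f}(M)$. The base case $k=0$ is precisely Proposition \ref{cvf}, which exploits the hypothesis $M \cong \mathbb{R}^n$ through the parallelogram construction, while the inductive step is exactly Proposition \ref{nabla_X}. Theorem \ref{liealg} then immediately produces a Lie subalgebra $\mathfrak{g} \subset \mathfrak{X}^\infty(\I M)$ generated by $\mathcal{V}$ that is tangent to $\mathsf{Hol_f}(M)$.

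The remaining step is to identify $\mathfrak{g}$ with $\mathfrak{hol_f}(M)$. The inclusion $\mathfrak{g}\subset \mathfrak{hol_f}(M)$ is immediate: by property (ii) in the definition of $\mathfrak{hol_f}(M)$ the latter contains $\mathcal{V}$, and being a Lie algebra it also contains the Lie algebra generated by $\mathcal{V}$. For the reverse inclusion, by the minimality built into the definition of $\mathfrak{hol_f}(M)$ it suffices to verify that $\mathfrak{g}$ itself is stable under every operator $\nabla_X$, $X\in \mathfrak{X}^\infty(M)$, since $\mathfrak{g}$ already contains all curvature vector fields.

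The main obstacle I anticipate is precisely this last stability property; it is the only step not reducible to cited propositions. I would deduce it from the Leibniz-type identity
\begin{displaymath}
\nabla_X[\xi,\eta] = [\nabla_X\xi,\eta] + [\xi,\nabla_X\eta],
\end{displaymath}
which follows from $\nabla_X\xi = [X^h,\xi]$ together with the Jacobi identity applied to the triple $X^h,\xi,\eta$. Combining this derivation property with the $\mathbb{R}$-linearity of $\nabla_X$, an induction on the bracket-depth of an arbitrary element of $\mathfrak{g}$ shows that $\nabla_X$ carries $\mathfrak{g}$ into itself. This closes the circle and yields $\mathfrak{g} = \mathfrak{hol_f}(M)$, proving the theorem.
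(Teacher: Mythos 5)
Your proposal is correct and follows essentially the route the paper intends: the paper states this theorem without an explicit proof (``the results of this section yield\ldots''), the implicit argument being exactly your combination of Proposition \ref{cvf} (base case, where $M\cong\mathbb{R}^n$ is used), Proposition \ref{nabla_X} (inductive step), and Theorem \ref{liealg}. Your final step --- verifying via the Jacobi-identity Leibniz rule $\nabla_X[\xi,\eta]=[\nabla_X\xi,\eta]+[\xi,\nabla_X\eta]$ that the Lie algebra generated by iterated covariant derivatives of curvature vector fields is stable under $\nabla_X$, hence coincides with the minimally defined $\mathfrak{hol_f}(M)$ --- is a genuine detail the paper leaves tacit, and you are right to flag it as the only step not reducible to the cited propositions.
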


\section{Infinitesimal holonomy algebra} 

Let $\mathfrak{hol_f}(M)\subset{\mathfrak X}^{\infty}({\I}M)$ be the
fibred holonomy algebra of the Finsler manifold $(M, \mathcal F)$ and
let $p$ be a a given point in $M$.
\begin{definition} 
  \emph{The Lie algebra $\mathfrak{hol}^{*}(p)\! := \!\big\{\, \xi_p \
    ;\ \xi\in\mathfrak{hol_f}(M)\,\big\} \subset{\mathfrak
      X}^{\infty}({\I}_pM)$ of vector fields on the indicatrix $\I_p M$
    is called the \emph{infinitesimal holonomy algebra at the point
      $p\in M$}.}
\end{definition} 
Clearly,  $\mathfrak{R}_p\subset\mathfrak{hol}^{*}(p)$ for any $p\in M$.\\
The following assertion is a direct consequence of the definition. It
shows that the infinitesimal holonomy algebra at a point $p$ of $(M,
\F)$ can be calculated in a neighbourhood of $p$.
\begin{remark}
  Let $(U, \mathcal F|_U)$ be an open submanifold of $(M, \F)$ such that
  $U\subset M$ is diffeomorphic to $\mathbb{R}^n$ and let $p\in U$. The
  infinitesimal holonomy algebras at $p$ of the Finsler manifolds $(M,
  \mathcal F)$ and $(U, \mathcal F|_U)$ coincide.
\end{remark}
Now, we can prove the following 
\begin{theorem} 
  The infinitesimal holonomy algebra $\mathfrak{hol}^{*}(p)$ at any
  point $p$ of the Finsler manifold $(M, \F)$ is tangent to the holonomy
  group $\mathsf{Hol}(p)$.
\end{theorem}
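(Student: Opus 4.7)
The plan is to reduce the global statement to the case $M \cong \mathbb{R}^n$, which has already been handled in the previous section. Fix $p \in M$ and choose an open neighbourhood $U$ of $p$ diffeomorphic to $\mathbb{R}^n$, on which we consider the restricted Finsler manifold $(U,\F|_U)$. By the remark on locality, the infinitesimal holonomy algebras of $(M,\F)$ and of $(U,\F|_U)$ at $p$ coincide, so every element $\zeta \in \mathfrak{hol}^{*}(p)$ arises as $\zeta = \xi|_{\I_p M}$ for some $\xi \in \mathfrak{hol_f}(U)$.

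Next I would invoke the preceding theorem: since $U$ is diffeomorphic to $\mathbb{R}^n$, the fibred holonomy algebra $\mathfrak{hol_f}(U)$ is tangent to the fibred holonomy group $\mathsf{Hol_f}(U)$. Applied to the particular element $\xi$ above, this produces a $\mathcal{C}^{1}$-differentiable $1$-parameter family $\{\Phi_t\}_{t \in (-\varepsilon,\varepsilon)}$ of fibre preserving diffeomorphisms of $\I U$ with $\Phi_0 = \mathsf{Id}$, $\Phi_t \in \mathsf{Hol_f}(U)$, and $\frac{\partial \Phi_t}{\partial t}\big|_{t=0} = \xi$.

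Restricting to the indicatrix at $p$ gives $\phi_t := \Phi_t|_{\I_p M}$, a $\mathcal{C}^{1}$-differentiable $1$-parameter family of diffeomorphisms of $\I_p M$ with $\phi_0 = \mathsf{Id}$ and, since differentiation is a pointwise operation, $\frac{\partial \phi_t}{\partial t}\big|_{t=0} = \xi|_{\I_p M} = \zeta$. Because $\phi_t \in \mathsf{Hol}(p; U)$ is realised by parallel translations along piecewise differentiable closed loops in $U$ based at $p$, and every such loop is equally a loop in $M$ along which the Berwald parallel translation is defined by the same system \eqref{eq:D}, we have the inclusion $\mathsf{Hol}(p; U) \subseteq \mathsf{Hol}(p; M) = \mathsf{Hol}(p)$. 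Hence $\phi_t \in \mathsf{Hol}(p)$ for all $t$, which exhibits $\zeta$ as a tangent vector field to $\mathsf{Hol}(p)$. Since $\zeta \in \mathfrak{hol}^{*}(p)$ was arbitrary, the infinitesimal holonomy algebra is tangent to $\mathsf{Hol}(p)$.

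The substantive point, and the only place that requires care, is the inclusion $\mathsf{Hol}(p; U) \subseteq \mathsf{Hol}(p; M)$ together with the compatibility between the two operations of restricting vector fields (from $\I U$ to $\I_p M$) and restricting families of diffeomorphisms (from $\mathsf{Hol_f}(U)$ to $\mathsf{Hol}(p)$); everything else is an immediate application of the locality remark and the $\mathbb{R}^n$-version of the fibred holonomy theorem.
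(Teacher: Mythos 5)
Your proposal is correct and follows essentially the same route as the paper's proof: reduce to an open neighbourhood $U$ of $p$ diffeomorphic to $\mathbb{R}^n$ via the locality remark, invoke the theorem that $\mathfrak{hol_f}(U)$ is tangent to $\mathsf{Hol_f}(U)$, and restrict the resulting family of fibre preserving diffeomorphisms to the indicatrix $\I_pM$. The only difference is one of explicitness: you spell out the inclusion of the holonomy group of $(U,\F|_U)$ at $p$ into $\mathsf{Hol}(p)$ and the compatibility of restricting vector fields with restricting diffeomorphism families, steps the paper's terse three-line proof leaves implicit.
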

\begin{proof} 
  Let $U\subset M$ be an open submanifold of $M$, diffeomorphic to
  $\mathbb{R}^n$ and containing $p\in M$. According to the previous
  remark we have $\mathfrak{hol}^{*}(p)\! := \!\big\{\, \xi_p\ ; \ \xi
  \in \mathfrak{hol_f}(U) \,\big\}$.  Since the fibred holonomy algebra
  $\mathfrak{hol_f}(U)$ is tangent to the fibred holonomy group
  $\mathsf{Hol_f}(U)$ we obtain that $\mathfrak{hol}^{*}(p)$ is tangent
  to the holonomy group $\mathsf{Hol}(p)$.
\end{proof}

\section{Holonomy algebra} 

Let $x(t)$, $0\leq t\leq a$ be a smooth curve joining the points
$q\!=\!x(0)$ and $p\!=\!x(a)$ in the Finsler manifold $(M, \mathcal
F)$. If $y(t)=\tau_ty(0)\in {\I}_{x(t)}M$ is a parallel vector field
along $x(t)$, $0\!\leq\! t\!\leq\! a$, where $\tau_t:\I_qM\to
\I_{x(t)}M$ denotes the homogeneous (nonlinear) parallel translation,
then we have
\begin{math}
  D_{\dot x} y (t):=\Big(\frac{d y^i(t)}{d t}+ G^i_j(x(t),y(t))\dot
  x^j(t)\Big)\frac{\partial}{\partial x^i} =0.
\end{math}
Considering a vector field $\xi$ on the indicatrix ${\I}_qM$, the map
${\tau_a}_*\xi\circ \tau_a^{-1}: (p,y)\mapsto {\tau_a}_*\xi(y(a))$ gives
a vector field on the indicatrix $\I_pM$. Hence we can formulate
\begin{lemma}
  \label{conjalg} 
  For any vector field $\xi\in \mathfrak{hol}^{*}(q) \subset{\mathfrak
    X}^{\infty}({\I}_qM)$ in the infinitesimal holonomy algebra at $q$
  the vector field ${\tau_a}_*\xi\circ\tau_a^{-1} \!\in\! {\mathfrak
    X}^{\infty}({\I}_pM)$ is tangent to the holonomy group
  $\mathsf{Hol}(p)$.
\end{lemma}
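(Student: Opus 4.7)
The plan is to transport a witness family for tangency from $\mathsf{Hol}(q)$ to $\mathsf{Hol}(p)$ by conjugation with the parallel translation $\tau_a$. By the preceding theorem, the infinitesimal holonomy algebra $\mathfrak{hol}^{*}(q)$ is tangent to $\mathsf{Hol}(q)$, so for the given $\xi \in \mathfrak{hol}^{*}(q)$ there exists a $\mathcal{C}^1$-differentiable $1$-parameter family $\{\phi_t\}_{t\in(-\varepsilon,\varepsilon)}\subset \mathsf{Hol}(q)$ satisfying $\phi_0 = \mathsf{Id}_{\I_qM}$ and $\frac{\partial \phi_t}{\partial t}\big|_{t=0} = \xi$.

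Next I would introduce the conjugated family
\begin{displaymath}
\psi_t := \tau_a \circ \phi_t \circ \tau_a^{-1}\in \mathsf{Diff}^\infty(\I_pM),
\end{displaymath}
and verify that each $\psi_t$ lies in $\mathsf{Hol}(p)$. By definition of the holonomy group at $q$, every $\phi_t$ is the parallel translation $\tau_{c_t}$ along some piecewise differentiable closed loop $c_t$ based at $q$. Using the functorial identity $\tau_{\gamma_2 \cdot \gamma_1} = \tau_{\gamma_2}\circ\tau_{\gamma_1}$ for concatenable curves, $\psi_t$ coincides with the parallel translation along the loop $x\cdot c_t\cdot x^{-1}$ based at $p$, where $x$ is the curve defining $\tau_a$. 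Hence $\psi_t \in \mathsf{Hol}(p)$ for every $t$.

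Finally, I would compute the derivative at $t=0$. Since $\tau_a$ and $\tau_a^{-1}$ are fixed smooth diffeomorphisms independent of $t$, the family $\psi_t$ inherits $\mathcal{C}^1$ dependence on $t$, $\psi_0 = \mathsf{Id}_{\I_pM}$, and the chain rule yields
\begin{displaymath}
\frac{\partial \psi_t}{\partial t}\Big|_{t=0}(y) \;=\; (\tau_a)_*\!\left(\frac{\partial \phi_t}{\partial t}\Big|_{t=0}(\tau_a^{-1}(y))\right) \;=\; \big({\tau_a}_{*}\xi \circ \tau_a^{-1}\big)(y)
\end{displaymath}
for every $y\in \I_pM$. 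This exhibits $\{\psi_t\}$ as a witness family for the tangency of ${\tau_a}_{*}\xi\circ \tau_a^{-1}$ to $\mathsf{Hol}(p)$.

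The content of the argument is concentrated in a single observation, namely that conjugation of loop-based parallel translations at $q$ by a translation from $q$ to $p$ yields loop-based parallel translations at $p$; there is no technical obstacle of the kind that arose for tangency of iterated brackets, because we only need a $\mathcal{C}^1$ $1$-parameter family, not a commutator-like family, and that regularity is preserved trivially by composition with the smooth fixed maps $\tau_a$ and $\tau_a^{-1}$.
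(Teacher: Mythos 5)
Your proposal is correct and follows essentially the same route as the paper: both take a $\mathcal{C}^1$ witness family $\{\phi_t\}\subset\mathsf{Hol}(q)$ for the tangency of $\xi$ and conjugate it by $\tau_a$ to obtain the family $\tau_a\circ\phi_t\circ\tau_a^{-1}$ in $\mathsf{Hol}(p)$ with derivative ${\tau_a}_*\xi\circ\tau_a^{-1}$ at $t=0$. Your only addition is spelling out, via loop concatenation $x\cdot c_t\cdot x^{-1}$, why the conjugates lie in $\mathsf{Hol}(p)$ --- a detail the paper asserts without elaboration.
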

\begin{proof} 
  Let $\{\phi_t\in \mathsf{Hol}(q)\}_{t\in(-\varepsilon,\varepsilon)}$
  be a ${\mathcal C}^1$-differentiable $1$-parameter family of
  diffeomorphisms of $\I_qM$ belonging to the holonomy group
  $\mathsf{Hol}(q)$ and satisfying the conditions $\phi_0 =\mathsf{Id}$,
  \; \( \frac{\partial\phi_t}{\partial t}\big|_{t=0}=\xi.\) Since the
  $1$-parameter family
  \begin{displaymath}
    {\tau_a}\circ\phi_t\circ \tau_a^{-1} 
    \in\mathsf{Diff}^{\infty}({\I}_pM)\}_{t\in(-\varepsilon,\varepsilon)}
  \end{displaymath}
  of diffeomorphisms consists of elements of the holonomy group
  $\mathsf{Hol}(p)$ and satisfies the conditions
  \begin{displaymath}
    {\tau_a}\circ\phi_0\circ \tau_a^{-1}
    =\mathsf{Id}, \quad \quad  \frac{\partial \big(\tau_a\circ\phi_t
      \circ \tau_a^{-1}\big)}{\partial t}\Big|_{t=0}
    ={\tau_a}_*\xi\circ \tau_a^{-1},
  \end{displaymath}
  the assertion follows.
\end{proof}
\begin{definition}
  \emph{A vector field $\mathbf{B}_\gamma\xi \in {\mathfrak
      X}^{\infty}({\I}_pM)$ on the indicatrix ${\I}_pM$ will be called}
  the Berwald translate \emph{of the vector field $\xi\in {\mathfrak
      X}^{\infty}({\I}_qM)$ along the curve $\gamma=x(t)$ if}
  \begin{displaymath}
    \mathbf{B}_\gamma\xi = {\tau_a}_*\xi\circ({\tau_a})^{-1}.
  \end{displaymath}
\end{definition}   
\begin{remark}
  \emph{Let $y(t)=\tau_ty(0)\in {\I}_{x(t)}M$ be a parallel vector field
    along $\gamma\!=\!x(t)$, $0\leq t\leq a$, started at $y(0)\in
    \I_{x(0)}M$. Then, the vertical vector field $\xi_t=\xi(x(t),y(t))$
    along $(x(t),y(t))$ is the Berwald translate \(\xi_t =
    \tau_{t*}\xi_0\circ{\tau_t}^{-1}\) if and only if}
  \begin{displaymath}
    \nabla_{\dot x}\xi = \left(\frac {\partial\xi^i(x,y)}{\partial x^j} 
      - G_j^k(x,y)\frac{\partial \xi^i(x,y)}{\partial y^k} + 
      G^i_{j k}(x,y)\xi^k(x,y)\right){\dot x}^j\frac {\partial}{\partial
      y^i}=0.
  \end{displaymath}
\end{remark}
Now, lemma \ref{conjalg} yields the following
\begin{corollary}
  If $\xi\!\in\! \mathfrak{hol}^{*}(q)$ then its Berwald translate
  $\mathbf{B}_\gamma\xi\!\in\! {\mathfrak X}^{\infty}({\I}_pM)$ along
  any curve $\gamma\!=\!x(t)$, $0\leq t\leq a$, joining $q\!=\!x(0)$ with
  $p\!=\!x(a)$ is tangent to the holonomy group $\mathsf{Hol}(p)$.
\end{corollary}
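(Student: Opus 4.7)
The plan is to observe that this corollary is an essentially immediate repackaging of Lemma \ref{conjalg} using the new terminology introduced in the preceding definition. By definition, the Berwald translate of $\xi \in \mathfrak{X}^{\infty}(\I_qM)$ along the curve $\gamma = x(t)$, $0\leq t\leq a$, is precisely the vector field
\begin{displaymath}
  \mathbf{B}_\gamma \xi = {\tau_a}_* \xi \circ \tau_a^{-1} \in \mathfrak{X}^{\infty}(\I_pM),
\end{displaymath}
where $\tau_a : \I_qM \to \I_pM$ is the homogeneous (nonlinear) parallel translation along $\gamma$ associated with the Finsler structure. So the only thing to verify is that the hypothesis $\xi \in \mathfrak{hol}^{*}(q)$ matches the hypothesis of Lemma \ref{conjalg}.

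First, I would note that Lemma \ref{conjalg} asserts exactly that ${\tau_a}_*\xi\circ\tau_a^{-1}$ is tangent to $\mathsf{Hol}(p)$ whenever $\xi \in \mathfrak{hol}^{*}(q)$. Substituting the defined notation $\mathbf{B}_\gamma\xi$ on the left-hand side yields the statement of the corollary without any further computation.

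Since this is a pure restatement, there is essentially no obstacle; the only point worth being explicit about is that the curve $\gamma$ in the corollary is arbitrary (piecewise smooth, joining $q$ to $p$), and Lemma \ref{conjalg} was formulated for precisely such a curve via the parallel translation $\tau_a$ it induces. Thus one sentence of invocation suffices: fix any such $\gamma$, apply Lemma \ref{conjalg} to obtain tangency of ${\tau_a}_*\xi\circ\tau_a^{-1}$ to $\mathsf{Hol}(p)$, and rewrite this vector field as $\mathbf{B}_\gamma\xi$ using the definition of the Berwald translate. The corollary follows.
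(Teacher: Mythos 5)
Your proposal is correct and matches the paper exactly: the paper introduces the corollary with ``Now, lemma \ref{conjalg} yields the following,'' treating it as an immediate restatement of Lemma \ref{conjalg} in the notation $\mathbf{B}_\gamma\xi = {\tau_a}_*\xi\circ(\tau_a)^{-1}$ of the Berwald translate, which is precisely your argument.
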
 
This last statement motivates the following 
\begin{definition}
  \emph{The} holonomy algebra $\mathfrak{hol}_p(M)$ \emph{of the Finsler
    manifold $(M, \mathcal F)$ at the point $p\in M$ is defined by the
    smallest Lie algebra of vector fields on the indicatrix $\I_p M$,
    containing the Berwald translates of all infinitesimal holonomy
    algebras along arbitrary curves $x(t)$, $0\leq t\leq a$ joining any
    points $q=x(0)$ with the point $p=x(a)$.}
\end{definition} 
Clearly, the holonomy algebras at different points of the Finsler
manifold $(M, \mathcal F)$ are isomorphic. The previous lemma and
corollary yield the following
\begin{theorem}  
  The holonomy algebra $\mathfrak{hol}_p(M)$ at $p\in M$ of a Finsler
  manifold $(M, \F)$ is tangent to the holonomy group $\mathsf{Hol}(p)$.
\end{theorem}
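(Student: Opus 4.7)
The plan is to reduce to Theorem~\ref{liealg} by exhibiting a set of \emph{strongly} tangent generators of $\mathfrak{hol}_p(M)$. A key preliminary observation is that Berwald translation $\mathbf{B}_\gamma\xi = (\tau_a)_*\xi\circ\tau_a^{-1}$, being pushforward by the diffeomorphism $\tau_a\colon\I_qM\to\I_pM$, is a Lie algebra isomorphism $\mathfrak{X}^\infty(\I_qM)\to\mathfrak{X}^\infty(\I_pM)$. Consequently, instead of translating every element of every $\mathfrak{hol}^{*}(q)$, it suffices to translate a generating set, and the output is a generating set of the Berwald-translated image.

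To construct such a generating set, I would pick for each $q\in M$ a neighborhood $U_q$ diffeomorphic to $\mathbb{R}^n$; by the earlier remark $\mathfrak{hol}^{*}(q)$ coincides with the restriction to $\I_qM$ of $\mathfrak{hol_f}(U_q)$. Proposition~\ref{cvf} shows that every curvature vector field of $(U_q,\F|_{U_q})$ is strongly tangent to $\mathsf{Hol_f}(U_q)$, and iterated application of Proposition~\ref{nabla_X} propagates strong tangency under $\nabla$. The Jacobi identity applied to $[X^h,[\xi,\eta]]$, together with $\nabla_X\xi=[X^h,\xi]$, yields the Leibniz rule $\nabla_X[\xi,\eta]=[\nabla_X\xi,\eta]+[\xi,\nabla_X\eta]$, so $\mathfrak{hol_f}(U_q)$ is in fact the Lie algebra generated by the iterated covariant derivatives of curvature vector fields alone. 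Restricting these generators to $\I_qM$ (using Corollary~\ref{infholgen}) produces a set $\mathcal{V}_q\subset\mathfrak{hol}^{*}(q)$ of vector fields strongly tangent to $\mathsf{Hol}(q)$ that generates $\mathfrak{hol}^{*}(q)$ as a Lie algebra.

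The central step is to show that for every $\eta\in\mathcal{V}_q$ and every curve $\gamma$ joining $q$ to $p$, the Berwald translate $\mathbf{B}_\gamma\eta$ is strongly tangent to $\mathsf{Hol}(p)$. Given a commutator-like family $\{\phi_{(t_1,\dots,t_k)}\}\subset\mathsf{Hol}(q)$ witnessing strong tangency of $\eta$, I conjugate it by $\tau_a$ and set $\tilde\phi_{(t_1,\dots,t_k)}:=\tau_a\circ\phi_{(t_1,\dots,t_k)}\circ\tau_a^{-1}$. Each $\phi_{(t)}$ is parallel translation along some loop $\ell_{(t)}$ at $q$, so $\tilde\phi_{(t)}$ is parallel translation along the loop $\gamma^{-1}\ell_{(t)}\gamma$ at $p$ and thus lies in $\mathsf{Hol}(p)$; the commutator-like property is preserved because conjugating the identity by $\tau_a$ still yields the identity; and differentiating gives
\begin{displaymath}
\frac{\partial^k\tilde\phi_{(t_1,\dots,t_k)}}{\partial t_1\cdots\partial t_k}\bigg|_{(0,\dots,0)}=(\tau_a)_*\eta\circ\tau_a^{-1}=\mathbf{B}_\gamma\eta.
\end{displaymath}
Therefore $\bigcup_{q,\gamma}\mathbf{B}_\gamma(\mathcal{V}_q)$ is a set of strongly tangent vector fields to $\mathsf{Hol}(p)$, and by the homomorphism property of $\mathbf{B}_\gamma$ it generates the same Lie algebra as $\bigcup_{q,\gamma}\mathbf{B}_\gamma(\mathfrak{hol}^{*}(q))$, namely $\mathfrak{hol}_p(M)$. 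Theorem~\ref{liealg} then delivers the claimed tangency. The only real delicacy is this strong-tangency bookkeeping: elements of $\mathfrak{hol}^{*}(q)$ are only \emph{a priori} tangent (not strongly tangent), so one cannot transfer the whole Lie algebra wholesale through the conjugation argument; the explicit generators $\mathcal{V}_q$ must be threaded through, and the isomorphism property of $\mathbf{B}_\gamma$ is what guarantees they still generate all of $\mathfrak{hol}_p(M)$ after translation.
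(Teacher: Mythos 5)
Your proof is correct, but it is not the paper's proof: the paper disposes of this theorem in a single line, asserting that Lemma~\ref{conjalg} and its corollary ``yield'' the result. Lemma~\ref{conjalg} is proved there by conjugating a $\mathcal{C}^1$ \emph{one}-parameter family witnessing mere tangency of $\xi\in\mathfrak{hol}^{*}(q)$ by $\tau_a$, which establishes tangency of each individual Berwald translate $\mathbf{B}_\gamma\xi$ --- but it says nothing about brackets of translates along different curves from different points, and $\mathfrak{hol}_p(M)$ is by definition the Lie algebra that these translates \emph{generate}. Since the paper itself warns (just before Definition~\ref{def:strongly_tangent}) that vector fields tangent to a group $H$ need not generate a Lie algebra tangent to $H$, the paper's stated proof leaves precisely this closure step implicit; your argument supplies it. You exhibit strongly tangent generators of each $\mathfrak{hol}^{*}(q)$ (iterated covariant derivatives of curvature vector fields, via Propositions~\ref{cvf} and~\ref{nabla_X}, the Leibniz identity $\nabla_X[\xi,\eta]=[\nabla_X\xi,\eta]+[\xi,\nabla_X\eta]$ coming from the Jacobi identity and $\nabla_X\xi=[X^h,\xi]$, and Corollary~\ref{infholgen} together with the remark localizing $\mathfrak{hol}^{*}(q)$ to $U_q\cong\mathbb{R}^n$); you then upgrade Lemma~\ref{conjalg} from tangency to \emph{strong} tangency --- conjugation of a commutator-like family by the fixed diffeomorphism $\tau_a$ stays commutator-like, lands in $\mathsf{Hol}(p)$, and has top mixed derivative $(\tau_a)_*\eta\circ\tau_a^{-1}$, the chain-rule correction terms dying because all lower-order mixed partials of a commutator-like family vanish at the origin; finally Theorem~\ref{liealg} applies. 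The observation that $\mathbf{B}_\gamma$ is a Lie algebra isomorphism, so that translated generators still generate $\mathbf{B}_\gamma(\mathfrak{hol}^{*}(q))$ and hence the union generates all of $\mathfrak{hol}_p(M)$, is exactly the right bookkeeping, and your closing caveat is well taken: elements of $\mathfrak{hol}^{*}(q)$ are only known to be tangent, so one cannot push the whole algebra through the one-parameter conjugation of Lemma~\ref{conjalg} and hope brackets take care of themselves. In short: the paper's route buys brevity and elementwise tangency of the translates; yours buys a complete proof of the theorem as stated, namely tangency of the full generated Lie algebra.
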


\end{document}